\theoremstyle{plain}
\newtheorem{theorem}{Theorem}[section]
\newtheorem{lemma}[theorem]{Lemma}
\theoremstyle{definition}
\newtheorem{remark}[theorem]{Remark}
 \DeclareMathOperator{\supp}{supp}
  \DeclareMathOperator{\lin}{Lin}
 \DeclareMathOperator{\co}{conv}
\newcommand{\N}{\mathbb{N}}
\newcommand{\T}{\mathbb{T}}
\newcommand{\D}{\mathbb{D}}
\newcommand{\C}{\mathbb{C}}
\newcommand{\Z}{\mathbb{Z}}
\renewcommand{\a}{\alpha}
\renewcommand{\b}{\beta}
\renewcommand{\l}{\lambda}
\newcommand{\s}{\sigma}
\newcommand{\p}{\varphi}
\renewcommand{\d}{\delta}
\newcommand{\dd}{\Delta}
\newcommand{\g}{\gamma}
\renewcommand{\gg}{\Gamma}
\newcommand{\z}{\zeta}
\newcommand{\ovl}{\overline}
\newcommand{\mycolon}{{:}\allowbreak\ }
\newcommand{\eps}{\varepsilon}
\renewcommand{\le}{\leqslant}
\renewcommand{\ge}{\geqslant}
\begin{document}

\title[Modulus support functionals]{Modulus support functionals, Rajchman measures and peak functions}

\author[L.~Golinskii]{L. Golinskii}

\address{B. Verkin Institute for Low Temperature Physics and Engineering of the National Academy of Sciences of Ukraine,
47 Nauky ave., Kharkiv, 61103, Ukraine
\newline
\href{https://orcid.org/0000-0002-7677-1210}{ORCID: \texttt{0000-0002-7677-1210} }
}
\email{golinskii@ilt.kharkov.ua}

\author[V. Kadets]{V. Kadets}

\address{V.N.Karazin Kharkiv National University,
4 Svobody sq., Kharkiv, 61022, Ukraine \newline
\href{http://orcid.org/0000-0002-5606-2679}{ORCID: \texttt{0000-0002-5606-2679} }}
\email{v.kateds@karazin.ua}

\subjclass[2010]{Primary 46B20; Secondary 30D40; 30H05}

\keywords{sup-attaining functional; space $c_0$; Wiener algebra; Rajchman measure; peak function}

\thanks{The research of the second author was done in the framework of the Ukrainian Ministry of  Education and Science Research Program 0118U002036 and was partially supported by the project PGC2018-093794-B-I00 (MCIU/AEI/FEDER, UE)}

\begin{abstract}
In 2000 V. Lomonosov suggested a counterexample to the complex version of the Bishop-Phelps theorem on modulus support functionals. 
We discuss the $c_0$-analog of that example and demonstrate that the set of sup-attaining functionals is non-trivial, thus answering 
an open question, asked in \cite{KLMW}.
\end{abstract}

\date{\today}

\maketitle
\thispagestyle{empty}


\section{Introduction}

In the text below, the letter $X$ is used for a Banach space,  $X^*$ is the corresponding dual space, 
$$ \mathbb B(X)=\{x\in X\mycolon \|x\|\le 1\}, \qquad \mathbb S(X)=\{x\in X\mycolon \|x\|=1\} $$ 
stand for its unit ball and sphere, respectively, $M^{\bot}$ is the annihilator in $X^*$ of a closed subspace $M$ of $X$.  
Abbreviation bcc-set means non-empty bounded closed convex set. For a given bcc subset $C \subset X$, a non-zero functional $h \in X^*$
is said to be a \emph{modulus support functional for} $C$ if there is a point $y \in C$ (called a corresponding \emph{modulus support point 
of} $C$) such that 
$$
|h(y)| = \sup_{x \in C} |h(x)|.
$$

We denote by $\D = \{z \in \C \mycolon |z| < 1\}$ the open unit disk in the field of complex numbers, $\T = \{z \in \C \mycolon |z| = 1\}$ 
the unit circle, and $\overline \D = \D \cup \T$ the closed unit disk. 
 
The classical result of Bishop and Phelps \cite{BP1, BP2} says that in every real Banach space $X$ for every bcc subset $C \subset X$ 
the set of modulus support functionals for $C$ is dense in $X^*$. The same question \cite{Phelps1991} for complex linear functionals on a complex Banach space remained open until 2000, when Victor Lomonosov \cite{Lom-2000, Lom-2000+} constructed his ingenious  counterexample in the predual space of $H^\infty$ (see also \cite{Lom-2001} and the next section).

In  \cite{Lom-2001} Lomonosov introduced the following definition: a complex Banach space $X$ has the attainable approximation property (AAP) 
if for any bcc-subset $W \subset X$ the corresponding set of modulus support functionals  is norm dense in $X^*$. 
By weak compactness argument, all reflexive spaces,  in particular $L^p[0,1]$
with $1<p<\infty$ and $\ell^p$ with $1< p<\infty$, enjoy the AAP.   In contrast, $L^\infty[0,1]$ and $\ell^\infty$ contain isometric copies of 
every separable Banach space (see \cite[Section 17.2.4, Exercises 5--8]{Kad2018} or \cite[Theorem 2.5.7]{AlKal}),
so Lomonosov's example can be transferred to those spaces. Consequently, the complex spaces $L^\infty[0,1]$ and $\ell^\infty$ do not have the AAP. 
Surprisingly, for such classical complex spaces as $c_0$ and $L^1[0,1]$, it is unknown whether they possess the AAP or not.

A natural approach to settle the problem in the negative is to transfer somehow the original Lomonosov's example to other spaces and to check if it preserves its properties in this new setting. Such a version of Lomonosov's example for the case of $c_0$ was introduced in the last section of \cite{KLMW}, where it was asked, in particular, if there are any modulus support functionals for that version. 

Although the $c_0$-version of Lomonosov's example and the corresponding question about norm-attaining functionals  were published ``officially'' in 2019, 
they are much older. The example was invented by V.~Kadets in 2003. Since then, the corresponding problem was reported to many colleagues that work in 
Banach space theory or Complex analysis (in particular, to V. Lomonosov), but with no progress. It was a lucky coincidence, that on October 29, 2019, 
the first author of this paper was attending the Kharkiv mathematical society meeting where the second author was advertising this open problem. 

In this note we demonstrate the existence of ``many'' modulus support functionals for the $c_0$-version of Lomonosov's set.
Nevertheless, the more involved question whether the set of modulus support functionals is dense in $c_0^*$ remains open, and so 
the possibility to disprove the AAP for $c_0$ by means of Lomonosov's example is still in doubt.

The structure of the paper is the following. In Section \ref{sect:Lomonosov-original} we recall the basic features of the original Lomonosov's example. 
In Section \ref{sect:Lomonosov-c0} we begin with the precise definition of  $c_0$-version  $S_0\subset \mathbb B(c_0)$ of Lomonosov's set.  
The key message of our note is a tight relation of modulus support functionals for $S_0$ to two notions in harmonic analysis, peak sets and Rajchman measures. 
We reveal this relation later in Section \ref{sect:Lomonosov-c0}, and so reduce the problem of existence of modulus support functionals for $S_0$ to a subtle problem 
of existence of certain Rajchman measures (in one direction the reduction was performed in \cite{KLMW}). 
In the last section we construct such measures as the Cantor measures of constant ratio, and demonstrate the way of generating
infinite families of such measures.

Starting from this point, we deal only with complex Banach spaces.

\section{The original Lomonosov's example} \label{sect:Lomonosov-original}

Let us equip the unit circle $\T$ with the normalized  Lebesgue measure $m(dt)$, and consider the corresponding space $L^1(\T)$. In the standard coupling 
\begin{equation} \label{eq:inner product}
\langle g, x \rangle = \int_{\T} x(t)g(t)\,m(dt), \qquad x\in L^1(\T), \ \ g\in L^\infty(\T), 
\end{equation}
the dual space to $L^1(\T)$ is identified with $L^\infty(\T)$. Let $H^1$ be the standard Hardy space, $H^1_0=t H^1$ be the closed linear span in 
$L^1(\T)$ of the functions $\{t^k\}_{k\ge1}$. Consider the quotient space $X =  L^1(\T)/H^1_0$. Then $X^* = (H^1_0)^\bot$ (see, e.g., 
\cite[Section 9.4.2]{Kad2018}), and so \cite[Chapter VII.A.1]{Koo-80}, 
$$
X^* =\{g \in L^\infty(\T) \colon  \int_{\T}  g(t) t^n m(dt) =0, \quad n = 1, 2, \ldots\} = H^{\infty}.
$$
The space $H^{\infty}$ is known to consist of those functions $g \in L^\infty(\T)$ that can be extended to bounded analytic functions 
in the open unit disk $\D$ in such a way that $\lim_{r \to 1}g(r \zeta) = g(\zeta)$ for almost all $\zeta\in \T$ \cite[Chapter I.D]{Koo-80}. Also, 
$\|g\|_{L^\infty}=\|g\|_{H^\infty}=\sup_{z\in \D}|g(z)|$. $H^{\infty}$ is a unital Banach algebra with the standard product and with 
the identity function ${\bf 1}$ being the unit element.

Each function $g\in H^\infty$ admits the following Cauchy representation (see, e.g., \cite[Chapter II.B.3]{Koo-80}):
\begin{equation}\label{cauchy}
g(z)=\int_{\T} \frac{g(t)}{1-\bar t z}\,m(dt), \quad z\in\D.
\end{equation}
Consider the family of functions
\begin{equation*}
u_z(t):=\frac1{1-\bar t z}=\sum_{k\ge0} z^kt^{-k}\in L^1(\T),
\end{equation*}
and their classes $[u_z]$ in the quotient space $X$. It is clear from \eqref{cauchy}, that each functional $g\in H^\infty$ acts on $[u_z]$ as the
evaluation functional
\begin{equation*}
\langle g, [u_z] \rangle=g(z), \qquad \|[u_z]\|_X=\sup_{\|g\|_\infty=1} |\langle g, [u_z] \rangle|=1.
\end{equation*}

Clearly, $X$ equals the closed linear span of $[u_z]$, $z \in \D$. Denote by $S$ the closed convex hull in $X$ of all $[u_z]$, $z \in \D$. 
This bcc set $S \subset \mathbb B\,(X)$ is a key ingredient of Lomonosov's example mentioned above. The main result of \cite{Lom-2000} says that the only 
modulus support functionals for $S$ are constant functions in $H^\infty$.

Let us briefly recall the Lomonosov's reasoning about modulus support functionals for $S$. First, \cite[Lemma 1]{Lom-2000} states that 
$$
\lim_{k \to \infty} \langle g^k, x \rangle  = 0
$$
for every non-constant $g \in \mathbb B\,(H^{\infty})$ and every $x \in S$. For $x=[u_z]$ this is true since $|g(z)|<1$, $z\in\D$. The rest follows from
the boundedness of the sequence $(g^k)$ together with the pointwise  convergence criterion for functionals (see \cite[Section 17.2.1]{Kad2018}). 
Next,  \cite[Lemma 2]{Lom-2000} says that if $h=h(S)\in H^{\infty}$ is a modulus support functional for $S$, and
$y=y(S) \in S$ is the corresponding modulus support point, then  
$$
\lim_{k \to \infty} \langle h^k, y \rangle  = 1.
$$
This follows from very clever Banach algebra argument: $H^{\infty}$ is a subalgebra of the algebra $C(M)$ of continuous functions on the 
corresponding Gelfand compact, action of $y$ on elements of $H^\infty$ can be represented as integral over $M$ with some Borel 
probability measure $\nu$, and $h(S)$ happens to be identical one on the support of $\nu$. These two results together imply that the only 
possible modulus support functionals for $S$ are constant functions.

\begin{remark} \label{remark21}
There is one more trick from \cite{Lom-2000} that ``kills'' the constant functions: consider instead of $X$ the quotient space 
$X_1 = X/\lin{\delta_0}$, then $X_1^*$ is the subspace of those $g \in H^{\infty}$ that $g(0) = 0$. Then the image $q(S)$ of $S$ 
under the quotient map $q\colon X \to X/\lin{\delta_0}$ is a bcc set in $X_1$ that possess no modulus support functionals at all!
\end{remark}

\section{Modulus support functionals for the space $c_0$} \label{sect:Lomonosov-c0}

In this section we consider the Banach spaces $c_0$ and $c_0^* = \ell^1=\ell^1(\Z_+)$ in the coupling
$$
\langle a, x \rangle = \sum_{n\ge0} x_n a_n, \qquad x\in c_0, \quad a\in\ell^1,
$$
where $x_n$, $a_n$ are the coordinates of vectors $x$ and $a$, respectively. We identify each element $x = (x_j)_{j\ge0} \in c_0$ with the 
function $f_x$ in the unit disk by the rule $f_x(z) = \sum_{n=0}^\infty x_n z^n$ for all $z\in \D$. In this way we identify $c_0$ with the 
corresponding Banach space $c_0(\D)$ of analytic functions having convergent to zero sequences of Taylor coefficients at the origin, equipped 
with the norm $\|f_x\|_{c_0} = \|x\|_\infty =  \max_{n \in \N} |x_n|$. Similarly, we identify $c_0^* = \ell^1$ with the Wiener algebra $W^+$
of analytic in the unit disk functions having absolutely convergent Taylor series
\begin{equation}\label{ident} 
a=(a_j)_{j\ge0}\in\ell^1 \Leftrightarrow f_a=\sum_{n\ge0} a_nz^n\in W^+, \quad \|f_a\|_+=\|a\|_1=\sum_{n\ge0} |a_n|. 
\end{equation}

The functional $a$ is said to be non-trivial if $a \notin \{ (\alpha, 0, 0, \ldots), \alpha \in \C\}$, that is, the function $f_a$ is non-constant. 
We define duality
$$
 \langle f_a, f_x \rangle = \langle a, x \rangle, 
 $$
which agrees in a sense with  the duality formula \eqref{eq:inner product} from Section \ref{sect:Lomonosov-original}.

The set $S_0$, a counterpart of Lomonosov's set above, looks as follows. Given $\l\in\D$, let
$$ \p_\l:=(\l^j)_{j\ge0}\in c_0, \qquad \|\p_\l\|_{c_0}=1, $$
and denote by $S_0$ the closed convex hull in $c_0$ of all $\p_\l$, $\l \in \D$. It is clear that
\begin{equation}\label{eq1} 
\langle a, \p_\l\rangle = f_a(\l), \qquad \forall a\in\ell^1.
\end{equation}
 
To have a new insight on the problem, we recall two notions from the harmonic analysis.

Given a finite complex Borel measure $\mu$ on the unit circle $\T$, its Fourier--Stieltjes coefficients $\widehat\mu(k)$
are defined by the formula 
 $$
\widehat\mu(k) =  \int_{\T} t^{-k} \mu(dt), \qquad k \in \Z.
 $$
The measure $\mu$  belongs to the {\it class} $\mathcal{R}$ (after A. Rajchman), if its 
Fourier--Stieltjes coefficients tend to zero on the left
\begin{equation*} 
\lim_{n\to +\infty} \widehat\mu(-n)=0.
\end{equation*}
As a matter of fact, $\lim_{|n|\to +\infty} \widehat\mu(n)=0$ holds in this case, see \cite[p. 203]{Hel10}.

A closed set $E\subset\T$ of measure zero is said to be a \emph{weak peak set} for $W^+$, if there is a function $g_E\in W^+$, called a \emph{weak peak function}, 
and a complex number $\b\not=0$ so that $g_E=\b$ on $E$ and $\|g_E\|_\infty=|\b|$. Obviously, a closed subset of a weak peak set for
$W^+$ is again a weak peak set.

We will define peak sets and peak functions later in the next section.

The idea of the result below is borrowed from \cite{KLMW}. We present it here for the sake of completeness.

\begin{theorem} \label{thm-example-direct}
Let $b$ be a non-trivial modulus support functional for the set $S_0$. Then there is a measure $\mu\in\mathcal{R}$ such that the sequence of its 
Fourier--Stieltjes coefficients $(\widehat\mu(-n))_{n\ge0}$ is the corresponding to $b$ modulus support point in $S_0$.
Moreover, the set $E=\supp\mu$ is a weak peak set for $W^+$, with $f_b$ being the corresponding weak peak function. 
\end{theorem}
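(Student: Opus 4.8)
The plan is to realize the modulus support point $y$ as a weak$^*$ barycenter of a probability measure on $\overline\D$, and then to use the modulus support equality together with the maximum modulus principle both to push that measure onto $\T$ and to identify $f_b$ on its support. Throughout, fix a modulus support point $y\in S_0$ corresponding to $b$, and set $c:=\sup_{x\in S_0}|b(x)|=|b(y)|$. Since $x\mapsto|\langle b,x\rangle|$ is convex and continuous and $S_0$ is the closed convex hull of $\{\p_\l:\l\in\D\}$, by \eqref{eq1} we get $c=\sup_{\l\in\D}|f_b(\l)|$, which is positive because $b\neq0$, and equals $\max_{\overline\D}|f_b|$ since $f_b\in W^+$ extends continuously to $\overline\D$.

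\emph{Step 1 (integral representation of $y$).} Write $y$ as the $c_0$-norm limit of convex combinations $y^{(k)}=\sum_i t_i^{(k)}\p_{\l_i^{(k)}}$, and let $\sigma_k=\sum_i t_i^{(k)}\delta_{\l_i^{(k)}}$ be the associated finitely supported probability measures on the compact metric space $\overline\D$. By weak$^*$ compactness (and metrizability) of the space of probability measures on $\overline\D$, pass to a subsequence with $\sigma_k\to\sigma$ weak$^*$. For every $a\in\ell^1$ the function $f_a$ lies in $W^+$, hence is continuous on $\overline\D$, so testing weak$^*$ convergence against $f_a$ and using \eqref{eq1} gives $\langle a,y\rangle=\lim_k\langle a,y^{(k)}\rangle=\lim_k\int_{\overline\D}f_a\,d\sigma_k=\int_{\overline\D}f_a\,d\sigma$. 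Applying this with $a$ equal to the coordinate functionals yields $y_n=\int_{\overline\D}\l^n\,d\sigma(\l)$ for all $n\ge0$, and with $a=b$ yields $\langle b,y\rangle=\int_{\overline\D}f_b\,d\sigma$.

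\emph{Step 2 (the measure lives on $\T$).} Now $|\int_{\overline\D}f_b\,d\sigma|=|\langle b,y\rangle|=c$, whereas $|\int f_b\,d\sigma|\le\int|f_b|\,d\sigma\le c$ since $\sigma$ is a probability measure and $|f_b|\le c$ on $\overline\D$. Equality throughout forces $|f_b|=c$ $\sigma$-a.e.\ and $f_b$ to have constant argument $\sigma$-a.e., i.e.\ $f_b=\b$ $\sigma$-a.e.\ for the constant $\b:=\langle b,y\rangle$ with $|\b|=c\neq0$. Since $f_b$ is continuous on $\overline\D$, the closed set $\{|f_b|=c\}$ has full $\sigma$-measure, hence contains $\supp\sigma$; but a non-constant $f_b\in W^+$ satisfies $|f_b(\l)|<\max_{\T}|f_b|=c$ for every $\l\in\D$ by the maximum modulus principle, so $\supp\sigma\cap\D=\emptyset$. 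Thus $\mu:=\sigma$ is a probability measure on $\T$, and $y_n=\int_{\T}\l^n\,d\mu(\l)=\widehat\mu(-n)$ for all $n\ge0$. Finally, $y\in c_0$ gives $\widehat\mu(-n)\to0$ as $n\to+\infty$, i.e.\ $\mu\in\mathcal{R}$, which proves the first assertion.

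\emph{Step 3 (the support is a weak peak set) and the main obstacle.} Put $E=\supp\mu$. By Step 2, $f_b\equiv\b$ on $E$ with $\b\neq0$ and $\|f_b\|_\infty=c=|\b|$; it remains to see that $m(E)=0$. If $m(E)>0$, then $f_b-\b\in W^+\subset H^\infty$ would vanish on a subset of $\T$ of positive Lebesgue measure, which is impossible for a nonzero $H^\infty$ function (its boundary modulus is log-integrable, by the F.\ and M.\ Riesz theorem); since $f_b$ is non-constant, $f_b-\b\not\equiv0$, a contradiction. Hence $E$ is a closed set of measure zero on which $f_b$ equals the nonzero constant $\b$ and $\|f_b\|_\infty=|\b|$, i.e.\ $E$ is a weak peak set for $W^+$ with $f_b$ a weak peak function. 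The routine ingredients are the convexity argument for $\sup_{S_0}|b|$ and the weak$^*$-compactness of probability measures on $\overline\D$; the genuinely load-bearing point is Step 2, where equality in the triangle inequality combined with the maximum modulus principle simultaneously eliminates the interior of $\D$ and forces $f_b$ to be constant on $\supp\mu$ --- the $c_0$-incarnation of Lomonosov's ``$h$ is identical one on the support of $\nu$'' phenomenon.
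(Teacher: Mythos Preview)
Your proof is correct and follows essentially the same route as the paper's: represent the support point as the barycenter of a weak$^*$-limit probability measure on $\overline\D$, then use the equality $|\int_{\overline\D} f_b\,d\sigma| = \|f_b\|_\infty$ together with the maximum modulus principle to force the measure onto $\T$ and $f_b$ to be constant on its support. Your Step~3 explicitly checks $m(\supp\mu)=0$ via the $H^\infty$ uniqueness theorem, a verification the paper's proof leaves implicit.
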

\begin{proof}
The following equality is important in the rest of the paper
\begin{equation} \label{eq2}
\sup_{x\in S_0} |\langle a,x \rangle|=\|f_a\|_\infty, \qquad \forall a\in\ell^1.
\end{equation}
Indeed, for $x\in \co(\p_\l)_{\l\in\D}$, that is,
$$ x=\sum_{k=1}^n w_k\p_{\l_k}, \qquad w_k\ge0, \quad \sum_{k=1}^n w_k=1, $$
we have, by \eqref{eq1},
$$ \langle a,x \rangle=\sum_{k=1}^n w_k \langle a,\p_{\l_k} \rangle=\sum_{k=1}^n w_k f_a(\l_k), \quad |\langle a,x \rangle|\le\|f_a\|_\infty. $$
On the other hand, if $\|f_a\|_\infty=|f_a(t)|$, $t\in\T$, then
$$ \lim_{r\to 1-0}|\langle a,\p_{rt} \rangle| = |f_a(t)|=\|f_a\|_\infty,  $$
as claimed.

Next, let $x=(x_j)_{j\ge0}\in S_0$. Take a convex combination tending to $x$,
\begin{equation}\label{eq2.1} 
w^{(n)}=\bigl(w^{(n)}_j\bigr)_{j\ge0}=\sum_k w_{n,k}\,\p_{\l_k} \rightarrow x, \quad n\to\infty, 
\end{equation}
each sum is finite. In the space $\mathcal{M}(\ovl\D) = C((\ovl\D)^*$ of finite Borel measures on $\ovl\D$ consider the sequence of 
probability measures (of unit total mass)
$$ \mu^{(n)}:=\sum_k w_{n,k}\,\d(\l_k), \qquad \sum_k w_{n,k}=1, $$
where $\d(\l)$ is the Dirac measure at the point $\l\in\ovl\D$. Due to *-weak compactness  of the subset of all probability measures in $\mathcal{M}(\ovl\D)$, we can assume 
(passing to a subsequence, if necessary), that $*-\lim_{n\to\infty}\mu^{(n)}=\mu$ for some probability measure $\mu=\mu_x$. 
For each fixed $j=0,1,2,\ldots$, the latter relation and \eqref{eq2.1} imply
\begin{equation*}
\begin{split}
\int_{\ovl{\D}} \l^j \mu^{(n)}(d\l) &=\sum_{k=1}^n w_{n,k}\,\l_k^j=w^{(n)}_j,  \\
\lim_{n\to\infty}\int_{\ovl{\D}} \l^j \mu^{(n)}(d\l) &=\int_{\ovl{\D}} \l^j \mu_x(d\l)=\widehat\mu_x(-j)=x_j, \quad j=0,1,2,\ldots.
\end{split}
\end{equation*} 
Since $x\in c_0$, we have
\begin{equation}\label{eq3}
\lim_{j\to\infty}\widehat\mu_x(-j)=0,
\end{equation}
so for each $x$ the measure $\mu_x$ belongs to the class $\mathcal{R}$.

Similarly, in view of \eqref{eq1} and \eqref{eq2.1}, for each $a\in\ell^1$ and $x\in S_0$
\begin{equation*}\label{eq3.1}
\begin{split}
\int_{\ovl{\D}} f_a(\l) \mu^{(n)}(d\l) &=\sum_{k=1}^n w_{n,k} f_a(\l_k)=\langle a, w^{(n)}\rangle,  \\
\lim_{n\to\infty}\int_{\ovl{\D}} f_a(\l) \mu^{(n)}(d\l) &=\int_{\ovl{\D}} f_a(\l) \mu_x(d\l)=\langle a, x\rangle.
\end{split}
\end{equation*}

In the case when $a=b$ is a modulus support functional for $S_0$, 
$y \in S_0$ is the corresponding modulus support point, and in view of the definition of modulus support functionals and \eqref{eq2}, 
we come to the main equality  
\begin{equation}\label{eq4}
\left| \int_{\ovl{\D}} f_b(\l) \mu_y(d\l) \right|=|\langle b, y\rangle|=\sup_{x\in S_0} |\langle b, x \rangle|=\|f_b\|_\infty.
\end{equation}
If, in addition, $b$ is non-trivial, it follows from \eqref{eq4} that $\supp\mu_y\subset\T$, and there is a constant $\b\in\C\backslash\{0\}$ such that
\begin{equation}\label{eq5}
f_b(t)=\b, \quad t\in \supp\mu_y; \qquad \|f_b\|_\infty=|\b|.
\end{equation}
So, $\mu=\mu_y$ is a desired measure. The proof is complete.
\end{proof}

\smallskip

Our next goal is to demonstrate that, conversely, each weak peak set $E$ and a measure $\nu\in\mathcal{R}$ with $\supp\nu \subset E$ generate modulus 
support point and functional for $S_0$.
We start with a lemma analogous to the fact that Riemann integral sums of a continuous function approximate the corresponding integral.
  
\begin{lemma}  \label{lem-int-sums}
For each $n\in\N$  we divide $\T$ in $n$  disjoint arcs $\Delta_{n,k}$, $k = 1,2, \ldots, n$ , of equal length:
$$ \dd_{n,k}:=\Bigl[e^{\frac{2(k-1)\pi i}{n}}, e^{\frac{2k\pi i}{n}}\Bigl), \qquad k=1,2,\ldots,n, \quad m(\dd_{n,k})=\frac1n. $$
Put $\z_{n,k}:=r_n\,\exp\bigl(\frac{(2k-1)\pi i}{n}\bigr)\in\D$, where $0<r_n<1$ is taken in such a way that
$$|\z_{n,k}-t|<\frac{\pi}{n}, \quad \forall t\in\dd_{n,k}, \quad k=1,2,\ldots,n. $$ 
Given an arbitrary Borel probability measure $\nu$ on $\T$, denote 
$$ \nu_n:=\sum_{k=1}^n \nu(\dd_{n,k})\,\d(\z_{n,k})\in\mathcal{M}(\ovl\D). $$ 
Then for every continuous function $f$ on $\ovl{\D}$ 
$$\lim_{n \to \infty} \int_{\ovl{\D}} f(\l) \nu_n(d\l)=\int_{\T} f(\l) \nu(d\l).$$
\end{lemma}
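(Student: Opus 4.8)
The plan is to estimate the difference $\left|\int_{\ovl\D} f(\l)\,\nu_n(d\l) - \int_{\T} f(\l)\,\nu(d\l)\right|$ directly using uniform continuity of $f$ on the compact set $\ovl\D$. First I would observe that, since $\nu$ is a probability measure on $\T$ and the arcs $\dd_{n,k}$, $k=1,\ldots,n$, partition $\T$, we may write
\[
\int_{\T} f(\l)\,\nu(d\l) = \sum_{k=1}^n \int_{\dd_{n,k}} f(t)\,\nu(dt),
\]
while by definition of $\nu_n$,
\[
\int_{\ovl\D} f(\l)\,\nu_n(d\l) = \sum_{k=1}^n \nu(\dd_{n,k})\, f(\z_{n,k}) = \sum_{k=1}^n \int_{\dd_{n,k}} f(\z_{n,k})\,\nu(dt).
\]
Subtracting term by term gives
\[
\left|\int_{\ovl\D} f\,d\nu_n - \int_{\T} f\,d\nu\right| \le \sum_{k=1}^n \int_{\dd_{n,k}} \bigl|f(\z_{n,k}) - f(t)\bigr|\,\nu(dt).
\]

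Next I would fix $\ep > 0$ and invoke uniform continuity of $f$ on $\ovl\D$: there is $\d > 0$ such that $|f(w) - f(w')| < \ep$ whenever $w, w' \in \ovl\D$ and $|w - w'| < \d$. By the construction in the lemma, for $t \in \dd_{n,k}$ we have $|\z_{n,k} - t| < \pi/n$, so as soon as $n > \pi/\d$ the estimate $|f(\z_{n,k}) - f(t)| < \ep$ holds for every $k$ and every $t \in \dd_{n,k}$. Plugging this in,
\[
\left|\int_{\ovl\D} f\,d\nu_n - \int_{\T} f\,d\nu\right| \le \sum_{k=1}^n \int_{\dd_{n,k}} \ep\,\nu(dt) = \ep\,\nu(\T) = \ep,
\]
using $\sum_{k=1}^n \nu(\dd_{n,k}) = \nu(\T) = 1$. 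Since $\ep > 0$ was arbitrary, the limit is $0$, which is the claim.

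This argument is essentially routine; there is no real obstacle. The only points that deserve a line of care are: that the arcs genuinely partition $\T$ (they are half-open and cover $\T$ by construction, so $\nu$ is additive over them), and that $r_n < 1$ can indeed be chosen so that $|\z_{n,k} - t| < \pi/n$ for all $t \in \dd_{n,k}$ — this is where one notes that the chord from the arc midpoint direction at radius close to $1$ to any point of the arc has length bounded by the arc length plus the radial gap, both of which are controlled, and for $r_n$ sufficiently close to $1$ the bound $\pi/n$ is achieved; this is already asserted in the statement, so I would simply use it. The mild subtlety worth flagging is that $f$ is only assumed continuous on $\ovl\D$, not analytic or smooth, but continuity on a compact set is exactly what uniform continuity requires, so nothing more is needed.
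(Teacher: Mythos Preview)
Your proof is correct and is essentially the same as the paper's: both write the difference as $\sum_{k=1}^n \int_{\dd_{n,k}} |f(\z_{n,k}) - f(t)|\,\nu(dt)$ and bound each integrand by $\ep$ via uniform continuity of $f$ on $\ovl\D$, yielding the total bound $\ep\,\nu(\T)=\ep$. The only cosmetic difference is that the paper phrases the uniform continuity threshold directly as $\pi/N$ rather than introducing an intermediate $\d$.
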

 \begin{proof} 
The uniform continuity of $f$ implies that, for each $\eps > 0$, there is $N\in~\N$ such that for every $\z, \tau \in \ovl{\D}$ with 
$|\z - \tau| < \pi N^{-1}$, the inequality $|f(\z) - f(\tau)| < \eps$ holds true. Then, for every $n > N$ we have that
\begin{align*}
  & \left|\int_{\ovl{\D}} f(\l) \nu_n(d\l) - \int_{\T} f(\l) \nu(d\l)\right|  =  
  \left|\sum_{k=1}^n \nu(\dd_{n,k})\,f(\z_{n,k})-\sum_{k=1}^n\int_{\dd_{n,k}}f( t) \nu(dt)\right| \\ 
  &=  \left|\sum_{k=1}^n \left(\int_{\dd_{n,k}}f(\z_{n,k}) \nu(dt) -\int_{\dd_{n,k}}f( t) \nu(dt)\right)\right|  \\ 
  & \le \sum_{k=1}^n \int_{\dd_{n,k}}|f(\z_{n,k}) - f(t)|\,\nu(dt)  < \eps \nu(\T) = \eps.
\end{align*}
\end{proof}

\begin{theorem}  \label{thm-example-inverse}
Let $E$ and $h_E=f_b$, $b\in\ell^1$, be a weak peak set and a corresponding weak peak function for $W^+$, respectively. Let $\nu\in\mathcal{R}$ 
be a probability measure with $\supp\nu\subset E$. Then $b$ is the modulus support functional for $S_0$, and $y=(y_j)_{j\ge0}$, $y_j=\widehat\nu(-j)$, 
is the corresponding modulus support point.
\end{theorem}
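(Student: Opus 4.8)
The plan is to reverse the reasoning of Theorem~\ref{thm-example-direct}: I will exhibit $y$ as a weak limit in $c_0$ of the convex combinations of the vectors $\p_\l$ furnished by the discretization of Lemma~\ref{lem-int-sums}, and then read off that $b$ attains its supremum over $S_0$ at $y$. The first, trivial, observation is that $y\in c_0$: since $\nu\in\mathcal R$ we have $y_j=\widehat\nu(-j)\to0$, so $y$ is at least a legitimate candidate for membership in $S_0\subset c_0$.

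Next comes the main step, $y\in S_0$. For each $n$ take the measure $\nu_n=\sum_{k=1}^n\nu(\dd_{n,k})\,\d(\z_{n,k})$ of Lemma~\ref{lem-int-sums} and put $w^{(n)}:=\sum_{k=1}^n\nu(\dd_{n,k})\,\p_{\z_{n,k}}$. Because the arcs $\dd_{n,k}$ partition $\T$ and $\nu(\T)=1$, each $w^{(n)}$ is a finite convex combination of vectors $\p_\l$ with $\l\in\D$, so $w^{(n)}\in S_0$; moreover $\|w^{(n)}\|_{c_0}\le1$ since $\|\p_\l\|_{c_0}=1$. Now fix $a\in\ell^1$. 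The function $f_a$ is continuous on $\ovl\D$ (its Taylor series converges absolutely, hence uniformly, on $\ovl\D$), so Lemma~\ref{lem-int-sums} applied to $f=f_a$ gives
\[
\langle a,w^{(n)}\rangle=\int_{\ovl\D}f_a(\l)\,\nu_n(d\l)\ \longrightarrow\ \int_\T f_a(t)\,\nu(dt),\qquad n\to\infty.
\]
On the other hand, since $\sum_j|a_j|<\infty$ and $|t|=1$ on $\T$, Fubini's theorem yields $\langle a,y\rangle=\sum_j a_j\widehat\nu(-j)=\sum_j a_j\int_\T t^j\,\nu(dt)=\int_\T f_a(t)\,\nu(dt)$. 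Hence $\langle a,w^{(n)}\rangle\to\langle a,y\rangle$ for every $a\in\ell^1$, i.e.\ $w^{(n)}\to y$ weakly in $c_0$; and since the convex set $S_0$ is norm-closed it is weakly closed, so $y\in S_0$. I expect this to be the only delicate point: the $w^{(n)}$ need not converge to $y$ in the $c_0$-norm (the modulus of the $j$-th coordinate of $w^{(n)}$ is at most $|\z_{n,1}|^j=r_n^{\,j}$, and the geometric constraint of Lemma~\ref{lem-int-sums} forces $r_n\to1$, so this bound stays close to $1$ over a long range of $j$ while $y_j\to0$), which is precisely why one must pass through weak convergence and the weak closedness of $S_0$ rather than through a naive Riemann-sum approximation in norm.

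Finally I verify that $b$ is a modulus support functional with modulus support point $y$. The same Fubini computation with $a=b$ gives $\langle b,y\rangle=\int_\T f_b(t)\,\nu(dt)$, and since $\supp\nu\subset E$, $\nu$ is a probability measure and $f_b=h_E\equiv\b$ on $E$, this equals $\b\,\nu(E)=\b$. By the weak peak property $\|f_b\|_\infty=|\b|$, and by \eqref{eq2}, $\sup_{x\in S_0}|\langle b,x\rangle|=\|f_b\|_\infty=|\b|$. Hence $|\langle b,y\rangle|=|\b|=\sup_{x\in S_0}|\langle b,x\rangle|$; as $\b\ne0$ the functional $b$ is non-zero, so it is a modulus support functional for $S_0$ and $y$ is the corresponding modulus support point. (An alternative for the membership step: note that $y=\int_\T\p_{rt}\,\nu(dt)$, a Bochner integral over the compact set $r\T\subset\D$, which lies in $S_0$ for each $r\in(0,1)$, and then let $r\to1$ using $y\in c_0$; but the weak-limit argument above is shorter.)
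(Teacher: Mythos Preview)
Your proof is correct and follows essentially the same approach as the paper's: approximate $\nu$ by the discrete measures $\nu_n$ of Lemma~\ref{lem-int-sums}, show the corresponding convex combinations $w^{(n)}\in S_0$ converge weakly to $y$, invoke Mazur's theorem for the weak closedness of $S_0$, and then compute $\langle b,y\rangle=\b$ directly. The only cosmetic difference is that the paper applies Lemma~\ref{lem-int-sums} to $f(t)=t^j$ for each $j$ and then cites the coordinate-wise-plus-boundedness criterion for weak convergence in $c_0$, whereas you apply the lemma to $f=f_a$ for an arbitrary $a\in\ell^1$ and obtain weak convergence in one stroke; both are equivalent here.
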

\begin{proof}
We show first that $y=(y_j)_{j\ge0}$, $y_j=\widehat\nu(-j)$, belongs to $S_0$.  To this end, note that
$$ \langle a,y\rangle=\sum_{j=0}^\infty a_j\widehat\nu(-j)=\int_{\ovl{\D}} f_a(\l)\nu(d\l), \qquad \forall a\in\ell^1. $$
For each $n\in\N$, consider the arcs $\Delta_{n,k}$, $k = 1,2, \ldots, n$, the  points $\z_{n,k} \in \D$, and the measures $ \nu_n$ 
from Lemma \ref{lem-int-sums}. Denote
\begin{equation*}
\begin{split}
v^{(n)}_j &= \sum_{k=1}^n \nu(\dd_{n,k})\,\z_{n,k}^j=\int_{\ovl{\D}} \l^j\,\nu_n(d\l), \quad j=0,1,\ldots, \\
v^{(n)} &=\bigl(v^{(n)}_j\bigr)_{j\ge0}:=\sum_{k=1}^n \nu(\dd_{n,k})\, \p_{\z_{n,k}} \in S_0. 
\end{split}
\end{equation*}
Then, for each fixed $j=0,1,\ldots$, Lemma \ref{lem-int-sums} with $f(t) = t^j$ gives
\begin{equation*}
\lim_{n\to\infty}|v^{(n)}_j-y_j| = \lim_{n\to\infty} \left|\int_{\ovl{\D}} \l^j  \nu_n(d\l) - \int_\T t^j\nu(dt)\right| = 0.
\end{equation*}
By the weak convergence criterion in $c_0$ (coordinate-wise convergence plus boundedness, see \cite[Section 17.2.3, Theorem 1]{Kad2018}), 
this means that $v^{(n)}$ converge weakly to $y$, so $y$ belongs to the weak closure of $S_0$. 
But a closed convex set is also weakly closed \cite[Section 17.2.3, Theorem 3]{Kad2018}, so $y\in S_0$.

Next, $f_b$ is a weak peak function, and $\supp\nu\subset E$, so   
$$\langle b, y\rangle = \int_{\T} f_b(\l) \nu(d\l)=\int_{E} f_b(\l) \nu(d\l) =\b. $$ 
On the other hand, by \eqref{eq2},
$$ |\langle b, x\rangle|\le \|f_b\|_\infty=|\b|=|\langle b, y\rangle|, \qquad \forall x\in S_0, $$
as stated. The proof is complete. 
\end{proof}

\section{Can a peak set for $W^+$ bear a Rajchman measure?} \label{sect:peak-Rajchman}

The existence of singular measures in the class $\mathcal{R}$ is not obvious at all. It seems that these properties contradict each other, 
and they can hardly be reconciled. Indeed, the support of such measure $\mu$ is ``small'', and, according to the Uncertainty Principle \cite{HavJor-94}, 
this is an obstacle for the spectral smallness of $\mu$ which is now expressed by the Rajchman condition $\lim_{|n|\to\infty} \widehat\mu(n)=0$. 
Nevertheless, the properties are compatible, and we show the examples of such measures. Moreover, the support of the constructed measure will 
be a subset of a peak set for $W^+$.

A closed set $E\subset\T$ of measure zero is said to be a {\it peak set for} $W^+$, if there is a function $g_E\in W^+$, called a {\it peak function} so that
\begin{equation}\label{peak}
g_E(z)=1, \quad z\in E; \qquad |g_E(z)|<1, \quad z\in\ovl{\D}\backslash E.
\end{equation}
It is clear, that each peak set for $W^+$ is a peak set in the weak sense. Conversely, each weak peak set $F$ is a subset of a certain peak set. Indeed, 
let $f_F$ be a corresponding weak peak function so that $f_F=1$ on $F$. Define $E:=\{t\in\T: f_F=1\}\supset F$. It is easy to see that
$$ g_E(z):=\frac{f_F(z)+1}2 $$
is the peak function for the peak set $E$. It is not known, whether each closed subset of a peak set for $W^+$ is again a peak set (this is true for
some other classes of function, such as $A^\a$ below).

Recall the construction of the Cantor set of constant ratio $\xi$, $0<\xi<\frac12$. We start out from the unit interval $[0,1]$ and
remove a concentric open interval (with the center at $1/2$) of the length $1-2\xi$ at the first step. We remove then two concentric open 
intervals of the {\it relative} length $1-2\xi$ from each of two remained closed intervals at the second step, etc. So, at $n$-th step we remove
$2^{n-1}$ concentric open intervals of the relative length $1-2\xi$ from each remained closed interval. Denote by $E_n(\xi)$ the disjoint union
of $2^n$ closed intervals remaining after $n$-th step. The length of each equals $\xi^n$, so $m(E_n(\xi))=(2\xi)^n$, and $E_{n+1}(\xi)\subset E_n(\xi)$.
By the definition, the {\it Cantor set of constant ratio} $\xi$ is
$$ E(\xi)=\bigcap_{n=1}^\infty E_n(\xi), \qquad m(E(\xi))=0. $$
$E$ is a perfect subset of $[0,1]$. The Cantor triadic set arises for $\xi=\frac13$.

To define a related measure, denote by $\s_n(\xi)$ the normalized restriction of the Lebesgue measure on $E_n$. As is known \cite[p. 58]{HavJor-94}, 
the *-weak limit
$$ *-\lim_{n\to\infty} \s_n(\xi)=\s(\xi) $$
exists. It is usually referred to as the {\it Cantor measure of ratio} $\xi$. The measure $\s(\xi)$ is singular continuous, and $\supp \s(\xi)=E(\xi)$.

Any measure on $[0,1]$ can be carried over to a measure on $\T$ in a natural way by means of the mapping $t\to e^{2\pi it}$. We use the same symbol
$\s(\xi)$ for the Cantor measure of ratio $\xi$ on $\T$. An amazing feature of this measure is the fact that its Fourier--Stieltjes coefficients
are available explicitly \cite[Chapter V, (3.5)]{Zyg-02}
$$ \widehat\s_n(\xi)=(-1)^n\,\prod_{k=1}^\infty \cos\bigl(\pi n\xi^{k-1}(1-\xi)\bigr). $$
A complete description of the Cantor measures within the Rajchman class is due to R. Salem: $\s(\xi)\notin\mathcal{R}$ if and only if $\xi^{-1}$ is a
Pisot number, that is, an integer algebraic number with all its conjugates inside the unit disk \cite[Theorem XII.11.8]{Zyg-02}. For rational 
$\xi$ the result was proved earlier by N.K.~Bari, who showed that $\s(\xi)\notin\mathcal{R}$ if and only if $\xi^{-1}$ is a positive integer 
(so the standard Cantor triadic measure is not in $\mathcal{R}$). In conclusion, all Cantor measures $\s(\xi)$ but countably many belong to $\mathcal{R}$.

\smallskip

Going back to peak sets and functions for the Wiener algebra $W^+$, note that, to the best of our knowledge, the subject has not attracted much
attention so far. In contrast, there is a detailed account of such sets and functions for the space $A^\a$ \cite{NoWol-89, No}. By $A^\a$, 
$0<\a\le1$, we denote the class of analytic in $\D$ functions $f$ satisfying a Lipschitz condition of order $\a$
$$ |f(z)-f(w)|\le C|z-w|^\a, \qquad z,w \in\ovl\D. $$
In particular, \cite[Theorem 3.1]{NoWol-89} provides a metric condition on $E$ (in terms of the lengths of the complementary arcs) to be a peak set for $A^\a$, $0<\a<1$.
Precisely, let
$$ \T\backslash E=\bigcup_{n\ge1}\gg_n, $$
a disjoint union of open arcs. Then $E$ is a peak set for $A^\a$ as soon as
\begin{equation}\label{metrcond}
\sum_{n\ge1} m^{1-\a}(\gg_n)<\infty,
\end{equation}
or, equivalently, $d_E^{-\a}\in L^1(\T)$, $d_E(\z)$ is the distance from $\z$ to $E$. Condition \eqref{metrcond} can be easily verified for 
the Cantor sets $E(\xi)$ for certain values of $\xi$. Indeed, we have
$$ m(\gg_j)=\xi^k(1-2\xi), \qquad j=2^k,2^k+1, \ldots,2^{k+1}-1, \quad k=0,1,2,\ldots, $$
and so
$$ \sum_{n\ge1} m^{1-\a}(\gg_n)=\sum_{k=0}^\infty \sum_{j=2^k}^{2^{k+1}-1} \xi^{k(1-\a)}(1-2\xi)^{1-\a}=
(1-2\xi)^{1-\a}\,\sum_{k=0}^\infty (2\xi^{1-\a})^k<\infty, $$
as soon as
\begin{equation}\label{peakcond}
0<\xi<\Bigl(\frac12\Bigr)^{\frac1{1-\a}}.
\end{equation}
We come to the following conclusion: for all but countably many $\xi$ that satisfy \eqref{peakcond}, the Cantor set $E(\xi)$ is the peak set for $A^{\a}$, and
the Cantor measure $\s(\xi)\in\mathcal{R}$, simultaneously.

To complete the argument, we invoke a theorem of S.N.~Bernstein \cite[Theorem VI.3.1]{Zyg-02}, which states that 
$A^{\a}\subset W^+$ for $\a>\frac12$.
Summarizing, we obtain the following result.
\begin{theorem}  \label{thm-cantorreichmam}
For infinitely many values of $\xi$, the Cantor measure $\s(\xi)$ belongs to the Rajchman class $\mathcal{R}$,
and the corresponding Cantor set $E(\xi)=\supp\s(\xi)$ is the peak set for $W^+$. Consequently, there exist modulus support points and non-trivial 
modulus support functionals for the set $S_0$ from Section \ref{sect:Lomonosov-c0}.
\end{theorem}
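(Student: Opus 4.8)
The plan is to chain together the three ingredients already assembled in the paper: an explicit family of Cantor measures whose Fourier--Stieltjes coefficients are known in closed form, a Pisot-type criterion telling us which of these measures are Rajchman, a metric criterion telling us which of the supporting Cantor sets are peak sets for the Lipschitz algebra $A^\a$, and finally Bernstein's embedding $A^\a\subset W^+$ for $\a>\tfrac12$. Once all four facts are in place, the conclusion about $S_0$ is immediate from Theorem~\ref{thm-example-inverse}.

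First I would fix an exponent $\a$ with $\tfrac12<\a<1$ and consider the open interval of ratios
$$ I_\a=\Bigl(0,\ \bigl(\tfrac12\bigr)^{\frac1{1-\a}}\Bigr), $$
which is exactly the range \eqref{peakcond}. For every $\xi\in I_\a$ the computation carried out in the text shows $\sum_{n\ge1}m^{1-\a}(\gg_n)<\infty$, so by \cite[Theorem~3.1]{NoWol-89} the Cantor set $E(\xi)$ is a peak set for $A^\a$. Since $\a>\tfrac12$, Bernstein's theorem \cite[Theorem~VI.3.1]{Zyg-02} gives $A^\a\subset W^+$, hence the $A^\a$-peak function for $E(\xi)$ lies in $W^+$ and $E(\xi)$ is automatically a peak set — in particular a weak peak set — for $W^+$. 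Second, I would invoke Salem's description: $\s(\xi)\notin\mathcal{R}$ only when $\xi^{-1}$ is a Pisot number, and Pisot numbers form a countable set, so $\{\xi\in I_\a:\s(\xi)\notin\mathcal{R}\}$ is countable. Because $I_\a$ is an uncountable interval, the set of ``good'' ratios $\xi\in I_\a$ for which simultaneously $E(\xi)$ is a $W^+$-peak set \emph{and} $\s(\xi)\in\mathcal{R}$ is uncountable, in particular infinite.

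For any such good $\xi$, I would then apply Theorem~\ref{thm-example-inverse} with $E=E(\xi)$, with $b\in\ell^1$ the coefficient sequence of the $W^+$-peak function $g_{E(\xi)}$, and with $\nu=\s(\xi)$ (a probability measure in $\mathcal{R}$ whose support equals $E(\xi)\subset E$). The theorem yields directly that $b$ is a modulus support functional for $S_0$, with modulus support point $y=(y_j)_{j\ge0}$, $y_j=\widehat{\s(\xi)}(-j)$. The functional is non-trivial: a constant $f_b$ would force $\|f_b\|_\infty=|f_b(0)|$ to be attained everywhere on $\ovl\D$, contradicting the strict inequality $|g_E(z)|<1$ off $E$ in the definition \eqref{peak} of a peak function (alternatively, $f_b$ constant would make $y=\varphi_0$ the sequence $(1,0,0,\dots)$ which is not of the form $(\widehat\mu(-j))_j$ for a measure supported on a proper closed subset of $\T$). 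This proves the ``Consequently'' clause and completes the argument.

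The one genuinely delicate point — and the only place where something could go wrong — is the interplay between the two constraints on $\xi$: we need the \emph{same} $\xi$ to make $E(\xi)$ metrically thin enough to be a peak set and yet spectrally thin enough (via the Pisot obstruction) to carry a Rajchman measure. The paper resolves this cleanly by a cardinality argument (one obstruction removes only a countable set, the other leaves an interval), so no quantitative matching is required; the hard part is really just citing Salem's theorem and the Novinger--Wolff metric criterion correctly and checking that Bernstein's range $\a>\tfrac12$ is compatible with a nonempty interval \eqref{peakcond}, which it is since $(\tfrac12)^{1/(1-\a)}>0$ for every $\a<1$. I would remark in passing that the construction produces not a single example but an uncountable family, and — as the paper's final section indicates — one can further inflate this to infinite families by combining such measures, though that refinement is not needed for the statement as phrased.
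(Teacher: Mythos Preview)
Your proposal is correct and follows essentially the same route as the paper: fix $\a\in(\tfrac12,1)$, use the Noell--Wolff metric criterion together with Bernstein's embedding $A^\a\subset W^+$ to get that every $E(\xi)$ with $\xi\in I_\a$ is a $W^+$-peak set, then discard the countable Pisot-exceptional set via Salem's theorem, and feed the resulting $(\nu,E,g_E)$ into Theorem~\ref{thm-example-inverse}. Two small remarks: the authors of \cite{NoWol-89} are Noell and Wolff (not Novinger), and your explicit check of non-triviality of $b$ is a welcome addition that the paper leaves implicit.
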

\begin{remark}\label{manypeak}
Each peak function $g=g_E\in W^+$ generates a family of other peak functions in $W^+$, which correspond to the same peak set $E$. Precisely, let
$F$ be an analytic function in a neighborhood of $\ovl\D$. Since $g_E(\ovl{\D})\subset\ovl{\D}$, the Wiener--L\'evy theorem 
\cite[Theorem 6.2.16, (b)]{Simon-CCA4} states that the composition $G(z):=F(g_E(z))$ also lies in $W^+$. Let, in addition,
$$ |F(z)|<F(1)=1, \qquad z\in\D. $$
Then $G$ clearly satisfies \eqref{peak}, so $G$ is the peak function for $E$.

Here is an interesting particular case. Let $g_0=g_E(0)\not=0$, consider a Blaschke factor
$$ F(z):=e^{i\g}\,\frac{z-g_0}{1-\bar{g_0}z}\,, \qquad  e^{-i\g}=\frac{1-g_0}{1-\bar{g_0}}\,. $$
Then $G(z)=F(g_E(z))$ is the peak function for $W^+$, and $G(0)=0$.

In the above terminology, each modulus support functional generates an infinite family of other modulus support functionals with the same modulus support point.
\end{remark}

\begin{remark} \label{remark42}
It remains to establish the formal connection between the $c_0$-version of Lomonosov's example described in Section \ref{sect:Lomonosov-c0}, 
and the version from \cite{KLMW}. The latter version was written in the form in which the constant functions were already quotient out, like 
it was done in Remark \ref{remark21}. This means that, in order to get from our $S_0 \subset c_0$ to the version from \cite{KLMW}, one has to consider 
the subspace $E \subset c_0$ consisting of  vectors of the form $(\alpha, 0, 0, 0, \ldots)$ and to apply to $S_0$ the quotient map 
$q \mycolon c_0 \to c_0 / E$. The modified example is $q(S_0)$. Taking into account the natural identification of the quotient space 
$c_0 / E$ with the subspace $\tilde c_0 \subset c_0$ of vectors  $x = (x_j)_{j\ge0} \in c_0$  for which $x_0 = 0$, and  the natural identification 
of $(\tilde c_0)^*$ with the subspace $\tilde \ell_1 \subset \ell_1$ of vectors $a = (a_j)_{j\ge0} \in \ell_1$  for which $a_0 = 0$, one gets 
the representation of $q(S_0)$ from \cite{KLMW} in which the zero coordinates $x_0$ and $a_0$ are omitted, and the enumeration starts with the first coordinate. \\ 
\indent The important difference between $S_0$ and $q(S_0)$ is that in order to find a modulus support functional on  $q(S_0)$ one needs to 
build a modulus support functional  $a = (a_j)_{j\ge0} \in \ell_1$ on $S_0$ with the additional restriction $a_0 = 0$. In other words, 
one needs to find a peak function $G$ for $W^+$ with $G(0) = 0$. In Remark \ref{manypeak} we have demonstrated that this additional condition 
can be met, so the question from the introductory part of \cite{KLMW} about the existence of modulus attaining functionals on $q(S_0)$ solves in positive. 
The Problem 13.50 from \cite{KLMW} about the density in  $(\tilde c_0)^*$ of the set of modulus attaining functionals on $q(S_0)$ remains open.
\end{remark}


\begin{thebibliography}{99}

\bibitem{AlKal}   \textsc{F.~Albiac,  N.~Kalton},
 \emph{Topics in Banach space theory}.
Graduate Texts in Mathematics 233. Berlin: Springer. xi, 373~p. (2006). 

\bibitem{BP1}
  \textsc{E.~Bishop and R.~R.~Phelps},
  A proof that every Banach space is subreflexive,
  \emph{Bull. Amer. Math. Soc.} \textbf{67} (1961), 97--98.


\bibitem{BP2}
  \textsc{E.~Bishop and R.~R.~Phelps},
  The support functionals of a convex set,
  \emph{Proc. Sympos. Pure Math.} \textbf{7} (1963), 27--35.

\bibitem{HavJor-94}
 \textsc{V.~P.~Havin and B.~J\"oricke}
 \emph{The Uncertainty Principle in Harmonic Analysis}, Springer-Verlag, Berlin, 1994.
 
\bibitem{Hel10}
 \textsc{H.~Helson}
 \emph{Harmonic Analysis}, Hindustan Book Agency,  New Delhi, 2010.

\bibitem{Kad2018} 
 \textsc{V.~Kadets}, 
 \emph{A course in Functional Analysis and Measure Theory}. Translated from the Russian by Andrei Iacob. Universitext. Cham: Springer. xxii, 539~p. (2018).

\bibitem{KLMW}
  \textsc{V.~Kadets, G.~Lopez, M.~Mart\'{\i}n, and D.~Werner},
Norm attaining operators of finite rank. In:  Aron, Richard M. (ed.); Gallardo Guti\'errez, Eva A. (ed.); Martin, Miguel (ed.); Ryabogin, Dmitry (ed.); Spitkovsky, Ilya M. (ed.); Zvavitch, Artem (ed.). The mathematical legacy of Victor Lomonosov. Operator theory. Advances in Analysis and Geometry 2. Berlin: De Gruyter v, 300~p. (2020), 157--187. 

\bibitem{Koo-80}
 \textsc{P.~Koosis}
 \emph{Introduction to $H_p$ spaces}, CUP, Cambridge, 1980.


\bibitem{LinTza-01}
  \textsc{J.~Lindenstrauss and L.~Tzafriri},
  \emph{Classical Banach Spaces I: Sequence Spaces}, Springer-Verlag, Berlin, 1977.

\bibitem{Lom-2000}
  \textsc{V.~Lomonosov},
A counterexample to the Bishop-Phelps theorem in complex spaces,
  \emph{Israel J. Math.} \textbf{115} (2000),  25--28.

\bibitem{Lom-2000+}
  \textsc{V.~Lomonosov},
  On the Bishop-Phelps theorem in complex spaces,
  \emph{Quaest. Math.} \textbf{23} (2000), 187--191.

\bibitem{Lom-2001}
  \textsc{V.~Lomonosov},
  The Bishop-Phelps theorem fails for uniform non-selfadjoint dual operator algebras,
  \emph{J. Funct. Anal.} \textbf{185} (2001), 214--219.
  
\bibitem{NoWol-89}
 \textsc{A.~Noell and T.~Wolff},
 On peak sets for Lip $\alpha$ classes,
 \emph{J. Funct. Anal.}  \textbf{86} (1989), 136--179.
 
\bibitem{No}
 \textsc{A.~Noell},
 Peak sets and boundary interpolation sets for the unit disc: a survey, arXiv:1905.07441.  

\bibitem{Phelps1991}
  \textsc{R.~R.~Phelps},
  The Bishop-Phelps theorem in complex spaces: an open problem,
  \emph{Lecture Notes in Pure and Applied Mathematics} \textbf{136} (1991), 337--340.

\bibitem{Simon-CCA4}
  \textsc{B.~Simon},
  \emph{A Comprehensive Course in Analysis. Part 4:Operator Theory}, AMS, Providence, RI, 2015.

\bibitem{Zyg-02}
 \textsc{A.~Zygmund}
 \emph{Trigonometric Series}, 3d ed., Cambridge Math. Library, CUP, Cambridge, 2002.

\end{thebibliography}
\end{document}